\newcommand{\eqdist}{\stackrel{\textup{dist}}{=}}
\newcommand{\Risk}{\textup{Risk}}
\newcommand{\risk}{\Risk}
\newcommand{\wtheta}{\what{\theta}}
\newcommand{\exopt}{\textup{ExOpt}}
\newcommand{\sure}{\textup{SURE}}
\newcommand{\edf}{\textup{edf}}
\newcommand{\df}{\textup{df}}
\newcommand{\proj}{H}
\providecommand{\subopt}{_\star}
\newcommand{\ropt}{r\subopt}
\newcommand{\projnorm}{h_{\textup{op}}}
\begin{document}

\title{A comment and erratum on ``Excess Optimism: How Biased is the
  Apparent Error of an Estimator Tuned by SURE?''}

\author{Maxime Cauchois, Alnur Ali, and John Duchi}

\maketitle

\begin{abstract}
  We identify and correct an error in the paper ``Excess Optimism: How
  Biased is the Apparent Error of an Estimator Tuned by SURE?'' This
  correction allows new guarantees on the excess degrees of freedom---the
  bias in the error estimate of Stein's unbiased
  risk estimate (SURE) for an estimator tuned by directly
  minimizing the SURE criterion---for
  arbitrary SURE-tuned linear estimators.
  Oracle inequalities follow as a consequence of these results
  for such estimators.
\end{abstract}

\section{Introduction and setting}

In Tibshirani and Rosset's paper~\cite{TibshiraniRo19}, they consider
the Gaussian sequence model, where
\begin{equation}
  \label{eqn:gaussian-sequence}
  Y = \theta_0 + Z,
  ~~~
  Z \sim \normal(0, \sigma^2 I_n)
\end{equation}
for an unknown vector $\theta_0 \in \R^n$.  Their paper develops theoretical
results on the \emph{excess optimism}, or the amount of downward bias in
Stein's Unbiased Risk Estimate (SURE), when using SURE to select an
estimator. In their analysis of subset regression estimators~\cite[Section 4]{TibshiraniRo19}, there is an error in their calculation of this
optimism, which we identify and address in this short note. In addition, we
generalize the conclusions of their results, showing how the excess optimism
bounds we develop (through their inspiration) extend to essentially
arbitrary linear estimators. Their major conclusions and
oracle inequalities for SURE-tuned subset regression estimators thus
remain true and extend beyond projection estimators.


For any estimator $\what{\theta} : \R^n \to \R^n$
of $\theta_0$, we define the
risk
\begin{equation*}
  \Risk\big(\what{\theta}\,\big)
  \defeq \E\left[\ltwos{\what{\theta}(Y) - \theta_0}^2
    \right].
\end{equation*}
We study estimation via \emph{linear smoothers}~\cite{BujaHaTi89}, by which
we mean simply that there exists a collection of matrices $\{
\proj_s \}_{s \in S} \subset \R^{n \times n}$, indexed by $s \in S$, where
$S$ is a finite set. Each of these induces an estimator
$\what{\theta}_s(Y) \defeq \proj_s Y$ of $\theta_0$, with risk
\begin{align*}
  R(s) \defeq
  \Risk(\what{\theta}_s)
  = \norm{(I-\proj_s)\theta_0}_2^2 + \sigma^2 \tr(\proj_s^T \proj_s)
  = \ltwo{(I - \proj_s) \theta_0}^2 + \sigma^2 \lfro{\proj_s}^2.
\end{align*}
We write
\begin{equation*}
  p_s \defeq \tr(\proj_s)
\end{equation*}
for the (effective) degrees of freedom~\cite{Efron12} of $\what{\theta}_s(Y)
= \proj_s Y$, i.e., $\df(\what{\theta}_s) = \frac{1}{\sigma^2} \sum_{i =
  1}^n \cov(\what{Y}_i, Y_i)$ for $\what{Y}_i = [\what{\theta}(Y)]_i =
[\proj_s Y]_i$.  The familiar SURE risk criterion is then
\begin{equation*}
  \sure(s) \defeq \ltwo{Y - \proj_s Y}^2 + 2 \sigma^2 p_s,
\end{equation*}
which satisfies
\begin{equation*}
  \E[\sure(s)] = \ltwo{(I - \proj_s) \theta_0}^2 + \E[\ltwo{(I - \proj_s) Z}^2]
  + 2 \sigma^2 p_s =
  R(s) + n \sigma^2,
\end{equation*}
so that if $Y'$ is an independent draw from the Gaussian sequence
model~\eqref{eqn:gaussian-sequence}, then $\sure$ is an unbiased estimator
of the (prediction) error $\E[\ltwos{\what{\theta}(Y) - Y'}^2]$.

Let $s_0 \in S$ be the oracle choice of
estimator,
\begin{align*}
  s_0 \defeq \argmin_{s \in S} \left\{\Risk(\wtheta_s) \right\},
\end{align*}
and let $\what{s}(y)$ be the estimator minimizing the SURE criterion,
\begin{align*}
  \what{s}(Y) \defeq \argmin_{s \in S} \{ \sure(s) =
  \ltwo{Y - \proj_s Y}^2 + 2\sigma^2 p_s \}.
\end{align*}
\citet{TibshiraniRo19} study the excess risk that using
$\what{s}(Y)$ in place of $s_0$ induces in the estimate
$\what{\theta}_{\what{s}}$ of $\theta_0$, defining the
\emph{excess optimism}
\begin{equation*}
  \exopt(\wtheta_{\what{s}}) \defeq
  \Risk(\what{\theta}_{\what{s}}) + n \sigma^2 - \E[\sure(\what{s}(Y))]
  = \risk(\what{\theta}_{\what{s}}) + n \sigma^2 -
  \E\left[\min_{s \in S} \sure(s)\right],
\end{equation*}
which they conjecture is always nonnegative. Rearranging
this inequality and using that $\E[\min_{s \in S}
  \sure(s)] \le \min_{s \in S} \E[\sure(s)]$ yields the risk upper bound
(cf.~\cite[Thm.~1]{TibshiraniRo19}).
\begin{align}
  \label{eqn:excess-optimism-bound}
  \Risk(\wtheta_{\what{s}}) \le \Risk(\wtheta_{s_0}) + \exopt(\wtheta_{\what{s}}),
\end{align}
so bounding $\exopt$ suffices to control $\Risk(\wtheta_{\what{s}})$. By a
standard calculation~\cite[Sec.~1]{TibshiraniRo19},
\begin{align*}
  \exopt(\wtheta_{\what{s}})
  & = \E[\ltwos{\theta_0 - \proj_{\what{s}(Y)}(Y)}^2] + n \sigma^2
  - \E[\ltwos{\theta_0 + Z - \proj_{\what{s}(Y)}(Y)}^2
    + 2 \sigma^2 p_{\what{s}(Y)}] \\
  & = 2 \E\left[(\proj_{\what{s}(Y)}(Y)  - \theta_0)^T Z]
  - \sigma^2 p_{\what{s}(Y)}\right].
\end{align*}
Note that $Z = Y - \theta_0$ and $\E[Z] = 0$, and define the
estimated $Y_i$ by $\what{Y}_i = [\proj_{\what{s}(Y)}(Y)]_i$.  Then
following~\cite{TibshiraniRo19}, we see that if we define the \emph{excess
  degrees of freedom}
\begin{align}
  \edf(\wtheta_{\what{s}})
  \defeq \E \left[ \frac{1}{\sigma^2} \proj_{\what{s}(Y)}(Y)^T (Y - \theta_0)
    - p_{\what{s}(Y)} \right]
  = \bigg(\sum_{i = 1}^n \frac{1}{\sigma^2} \cov(\what{Y}_i, Y_i)\bigg)
  - \E[p_{\what{s}(Y)}],
  \label{eqn:edf}
\end{align}
the gap between the effective degrees of
freedom and the parameter ``count,'' then
\begin{equation*}
  \exopt(\wtheta_{\what{s}}) = 2 \sigma^2 \edf(\wtheta_{\what{s}}).
\end{equation*}
\citet{TibshiraniRo19} study bounds on this excess degrees of freedom.

\paragraph{The excess degrees of freedom and an error}
By adding and subtracting
$\proj_{\what{s}(Y)}(\theta_0)$ in the definition of the excess
degrees of freedom~\eqref{eqn:edf},
we obtain the equality
\begin{align}
  \edf(\wtheta_{\what{s}})
  = \E\left[ \frac{1}{\sigma^2} Z^T \proj_{\what{s}(Y)}(Z)
    - p_{\what{s}(Y)} \right]
  + \frac{1}{\sigma^2}\E\left[ \proj_{\what{s}(Y)}(\theta_0)^T Z \right].
  \label{eqn:edf-expansion}
\end{align}
This a corrected version of~\cite[Equation~(41)]{TibshiraniRo19}, as the
referenced equation omits the second term. In this note, we provide bounds
on this corrected $\edf$ quantity. The rough challenge is
that, with a naive analysis, the second term in the
expansion~\eqref{eqn:edf-expansion} may scale as
$\sigma\norm{\theta_0}_2\sqrt{\log |S|}$; we can prove that it
is smaller, and the purpose of Theorem~\ref{theorem:edf-truth} in the next
section is to provide a corrected bound on $\edf(\what{\theta}_{\what{s}})$.

\paragraph{Notation} We collect our mostly standard notation here.
For functions $f, g : \mc{A} \to \R$, where $\mc{A}$ is any abstract index
set, we write $f(a) \lesssim g(a)$ to indicate that there is a numerical
constant $C < \infty$, independent of $a \in \mc{A}$, such that
$f(a) \le C g(a)$ for all $a \in \mc{A}$. We let $\log_+ t =
\max\{0, \log t\}$ for $t > 0$. The operator norm of a matrix $A$ is
$\opnorm{A} = \sup_{\ltwo{u} = 1}
\ltwo{A u}$.

\section{A bound on the excess degrees of freedom}

We present a bound on the
excess degrees of freedom $\edf(\what{\theta}_{\what{s}})$ of the
SURE-tuned estimator $\what{\theta}_{\what{s}(Y)} = \proj_{\what{s}(Y)}(Y)$,
where $\what{s}(Y) = \argmin_{s \in S} \sure(s)$.
In the theorem, we recall the oracle estimator
$s_0 = \argmin_{s \in S} \Risk(\what{\theta}_s)$.
\begin{theorem}
  \label{theorem:edf-truth}
  Let $\ropt$ satisfy
  $\frac{1}{\sigma^2} R(s_0)
  = \frac{1}{\sigma^2} \ltwo{(I - \proj_{s_0})\theta_0}^2
  + \lfro{\proj_{s_0}}^2 \le \ropt$. Assume additionally that
  $\opnorm{\proj_s} \le \projnorm$ for all $s \in S$, where
  $\projnorm \ge 1$. Then
  \begin{equation*}
    \edf(\what{\theta}_{\what{s}})
    \lesssim \sqrt{\ropt \log |S|} + \projnorm \log |S| \cdot
      \left(1 + \log_+ \frac{\projnorm^2 \log |S|}{\ropt} \right).
  \end{equation*}
\end{theorem}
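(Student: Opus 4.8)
The plan is to rescale to $\sigma = 1$ (replacing $Y,\theta_0,Z$ by $Y/\sigma,\theta_0/\sigma,Z/\sigma$ leaves $\what{s}$, $\edf(\wtheta_{\what{s}})$, and the ratio $R(s)/\sigma^2$ invariant) and to bound the two pieces of the corrected expansion~\eqref{eqn:edf-expansion} separately, $\edf(\wtheta_{\what{s}}) = T_1 + T_2$ with $T_1 = \E[Z^T \proj_{\what{s}} Z - p_{\what{s}}]$ and $T_2 = \E[(\proj_{\what{s}}\theta_0)^T Z]$. The observation that removes the naive $\ltwo{\theta_0}\sqrt{\log|S|}$ scaling of the second term is that $\theta_0^T Z$ does not depend on the selected index: since $\proj_s \theta_0 = \theta_0 - (I-\proj_s)\theta_0$ and $\E[\theta_0^T Z] = 0$, we get $T_2 = -\E[((I-\proj_{\what{s}})\theta_0)^T Z]$. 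Writing $M_s = I - \proj_s$, the Gaussian linear functional $(M_s\theta_0)^T Z$ has standard deviation $\ltwo{M_s \theta_0}$, and crucially $\ltwo{M_s\theta_0}^2 \le R(s)$. Hence both $T_1$ and $T_2$ are governed by the risk $R(\what{s})$ of the selected model rather than by $\ltwo{\theta_0}$, and the problem reduces to (i) bounding the fluctuations of the relevant processes at the data-dependent index and (ii) showing that $R(\what{s})$ is comparable to the oracle value $R(s_0) \le \ropt$.

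For (i) I would apply two concentration facts uniformly over the finite family by a union bound. For the quadratic part the Hanson--Wright inequality gives, using $\opnorm{\proj_s}\le\projnorm$ (and that the symmetric part of $\proj_s$ has no larger Frobenius or operator norm), the sub-exponential tail $\Pr(Z^T \proj_s Z - p_s > t) \le \exp(-c\min\{t^2/\lfro{\proj_s}^2,\,t/\projnorm\})$ for each fixed $s$, so that with probability $1-e^{-u}$, simultaneously over $s \in S$, one has $Z^T \proj_s Z - p_s \lesssim \lfro{\proj_s}\sqrt{u+\log|S|} + \projnorm(u+\log|S|)$. For the linear part $(M_s\theta_0)^T Z \sim \normal(0,\ltwo{M_s\theta_0}^2)$ is sub-Gaussian, giving $|(M_s\theta_0)^T Z| \lesssim \ltwo{M_s\theta_0}\sqrt{u+\log|S|}$ uniformly on the same event. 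Evaluating both at $s=\what{s}$ and using $\lfro{\proj_{\what{s}}}\le\sqrt{R(\what{s})}$ and $\ltwo{M_{\what{s}}\theta_0}\le\sqrt{R(\what{s})}$ shows that, on this event, each of $T_1$ and $T_2$ is at most a constant times $\sqrt{R(\what{s})\,(u+\log|S|)} + \projnorm(u+\log|S|)$.

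For (ii) I would use the optimality $\sure(\what{s}) \le \sure(s_0)$. A direct expansion gives $\sure(s) = n + R(s) + \xi(s)$ with mean-zero $\xi(s) = 2\theta_0^T M_s^T M_s Z + (\ltwo{M_s Z}^2 - \E[\ltwo{M_s Z}^2])$, so that optimality reads $R(\what{s}) \le R(s_0) + \xi(s_0) - \xi(\what{s})$. After cancelling the index-independent part $\ltwo{Z}^2 - n$ of $\xi$, the remaining $s$-dependent terms are again one linear-in-$Z$ and two quadratic-in-$Z$ pieces, each controlled by the uniform bounds above; since $\lfro{\proj_s}\le\sqrt{R(s)}$ and $\opnorm{\proj_s}\le\projnorm$, this yields $|\xi(s) - (\ltwo{Z}^2-n)| \lesssim \projnorm\sqrt{R(s)\,(u+\log|S|)} + \projnorm^2(u+\log|S|)$ uniformly over $s$. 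Substituting into the basic inequality produces a quadratic inequality in $\sqrt{R(\what{s})}$ whose solution is $R(\what{s}) \lesssim \ropt + \projnorm^2(u+\log|S|)$. Feeding this back into the bounds from (i) and taking $u$ comparable to $\log|S|$ gives $T_1 + T_2 \lesssim \sqrt{\ropt\log|S|} + \projnorm\log|S|$ on an event of probability close to one.

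The main obstacle is converting these high-probability statements into a bound on the expectation $\edf(\wtheta_{\what{s}}) = \E[T_1+T_2]$ while respecting the localization $R(\what{s}) \lesssim \ropt + \projnorm^2(u+\log|S|)$, whose radius itself grows with the deviation level $u$. Because the quadratic process has only a sub-exponential tail, the clean way to proceed is to invert $B(u) := \sqrt{\ropt(u+\log|S|)} + \projnorm(u+\log|S|)$ and integrate $\Pr(T_1+T_2 > B(u)) \le e^{-u}$, which I would expect to give exactly $\sqrt{\ropt\log|S|} + \projnorm\log|S|$. The additional factor $1 + \log_+(\projnorm^2\log|S|/\ropt)$ in the statement is what a coarser, scale-by-scale treatment of the localized supremum produces: peeling the models into dyadic ranges of $\lfro{\proj_s}^2$ between the oracle level $\ropt$ and the maximal level $\projnorm^2\log|S|$ creates of order $\log_+(\projnorm^2\log|S|/\ropt)$ ranges, each contributing a term of order $\projnorm\log|S|$. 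Carrying out this peeling (or the corresponding tail integration) and bookkeeping the numerical constants is the routine but delicate remaining step; everything else follows from the two concentration inequalities and the basic inequality above.
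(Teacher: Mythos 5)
Your proposal is correct and, if carried out, would actually prove a slightly \emph{stronger} statement than the theorem, by a genuinely different route than the paper's. The two arguments share the same skeleton: the decomposition of $\edf(\what{\theta}_{\what{s}})$ into a quadratic part and a linear part, the basic inequality from SURE-optimality of $\what{s}$, Gaussian quadratic-form (Hanson--Wright type) concentration, and---crucially---the cancellation you identify, namely that the index-independent mean-zero term $\theta_0^T Z$ can be subtracted from the linear part so that what remains, $-((I - \proj_s)\theta_0)^T Z$, has variance at most $R(s)/\sigma^2$; this is the heart of the correction, and you make it more explicit than the paper does (the paper's assertion in the final step that $L_s = \frac{1}{\sigma^2}Z^T \proj_s \theta_0$ is $C \cdot 2^l \ropt$-sub-Gaussian on shell $l$ is only valid after exactly this subtraction, since $\ltwo{\proj_s \theta_0}$ can be of order $\ltwo{\theta_0}$). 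Where you diverge is in handling the random index. The paper peels $S$ into dyadic shells $\mc{S}_l$ of excess risk, bounds $\P(\what{s}(Y) \in \mc{S}_l)$ via Lemma~\ref{lemma:s-hat-should-be-good}, and controls each $\E[Q_{\what{s}(Y)} \indic{\what{s}(Y) \in \mc{S}_l}]$ by Cauchy--Schwarz against the second moment of the shell maximum (Lemmas~\ref{lemma:loads-of-maxima} and \ref{lemma:quadratic-forms}); that Cauchy--Schwarz step pays $\projnorm \log|S|$ for every shell whose probability is not yet exponentially small, i.e.\ for the roughly $\log_+(\projnorm^2 \log|S| / \ropt)$ shells with $2^l \ropt \lesssim \projnorm^2 \log |S|$, which is precisely the origin of the paper's extra factor. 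You instead work on a single uniform-concentration event at deviation level $u$, solve the quadratic inequality coming from the basic inequality to localize $R(\what{s}) \lesssim \ropt + \projnorm^2(u + \log|S|)$, bound the edf integrand at $\what{s}$ pointwise on that event, and integrate tails in $u$. The tail integration you defer is indeed routine: with $B(u) = \sqrt{\ropt(u + \log|S|)} + \projnorm(u + \log|S|)$ and $\P(|{\cdot}| > CB(u)) \lesssim e^{-u}$, one gets an expectation bound of order $B(1) + \int_0^\infty B'(u) e^{-u}\,du \lesssim \sqrt{\ropt \log|S|} + \projnorm \log|S|$, so your route eliminates the $\log_+$ factor entirely (and produces high-probability risk localization as a byproduct), which in particular implies the theorem as stated; the paper's peeling stays at the level of expectations and shell probabilities, at the cost of that extra logarithmic factor.
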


We prove the theorem in Section~\ref{sec:proof-edf-truth}, providing a bit
of commentary here by discussing subset regression estimators, as in the
paper~\cite[Sec.~4]{TibshiraniRo19}, then discussing more general smoothers
and estimates~\cite{BujaHaTi89}. As we shall see, the assumption that the
matrices $\proj_s$ have bounded operator norm is little loss of generality~\cite[Sec.~2.5]{BujaHaTi89}.

\subsection{Subset regression and projection estimators}

We first revisit the setting that \citet[Sec.~4]{TibshiraniRo19} consider,
when each matrix $\proj_s$ is an orthogonal projector. As a motiviating
example, in linear regression, we may take $\proj_s = X_s (X_s^T X_s)^{-1}
X_s^T$, where $X_s$ denotes the submatrix of the design $X \in \R^{n \times
  p}$ whose columns $s$ indexes. In this case, as $\proj_s^2 = \proj_s$, we
have $\opnorm{\proj_s} \le 1$ and $\tr(\proj_s) = \lfro{\proj_s}^2 = p_s$.
First consider the case that there is $s\opt$ satisfying $\proj_{s\opt}
\theta_0 = \theta_0$, that is, the true model belongs to the set $S$. Then
because $s_0$ minimizes $\frac{1}{\sigma^2} \ltwo{(I - \proj_{s})
  \theta_0}^2 + p_s$, we have
\begin{equation*}
  \edf(\what{\theta}_{\what{s}})
  \lesssim \sqrt{p_{s\opt} \log |S|} +
  \log|S| \cdot \left(1 + \log_+\frac{\log |S|}{p_{s\opt}}\right).
\end{equation*}
Whenever $p_{s\opt} \gtrsim \log|S|$, we have the simplified bound
\begin{equation*}
  \edf(\what{\theta}_{\what{s}})
  \lesssim \sqrt{p_{s\opt} \log |S|}.
\end{equation*}

An alternative way to look at the result is by replacing
$\ropt$ with $\frac{1}{\sigma^2} R(s_0) = \frac{1}{\sigma^2}
\min_{s \in S} R(s)$. In this case, combining~\cite[Thm.~1]{TibshiraniRo19}
with Theorem~\ref{theorem:edf-truth}, we obtain
\begin{align*}
  \Risk(\what{\theta}_{\what{s}})
  & \le R(s_0)
  + C
  \left(\sqrt{R(s_0) \cdot \sigma^2 \log |S|}
  + \sigma^2 \log |S| \cdot \left(1 + \log_+ \frac{\sigma^2 \log |S|}{R(s_0)}
  \right)\right)
\end{align*}
for a (numerical) constant $C$.
In particular, if $\min_{s \in S} \Risk(\what{\theta}_s) \gtrsim
\sigma^2 \log|S|$, then we obtain that
there exists a numerical constant $C$ such that for all $\eta > 0$,
\begin{align}
  \label{eqn:almost-oracle-inequality}
  \Risk(\what{\theta}_{\what{s}})
  & \le (1 + C \eta) \min_{s \in S} \risk(\what{\theta}_s)
  + C \frac{\sigma^2 \log|S|}{\eta}.
\end{align}

Additional perspective comes by considering some of the bounds
\citet[Sec.~4]{TibshiraniRo19} develop. While we cannot recover
identical results because of the correction
term~\eqref{eqn:edf-expansion} in the excess degrees of freedom, we can
recover analogues. As one example, consider their Theorem 2. It assumes that
$n \to \infty$ in the sequence model~\eqref{eqn:gaussian-sequence}, and
(implicitly letting $S$ and $s_0$ depend on $n$) that
$\risk(\what{\theta}_{s_0})/ \log |S| \to \infty$.  Then
inequality~\eqref{eqn:almost-oracle-inequality} immediately gives the oracle
inequality $\risk(\what{\theta}_{\what{s}}) = (1 + o(1))
\risk(\what{\theta}_{s_0})$, and Theorem~\ref{theorem:edf-truth} moreover
shows that
\begin{equation*}
  \frac{\edf(\what{\theta}_{\what{s}})}{\frac{1}{\sigma^2}
    \risk(\what{\theta}_{s_0})}
  \lesssim \sqrt{\sigma^2 \frac{\log|S|}{\risk(\what{\theta}_{s_0})}}
  \to 0.
\end{equation*}
In brief, \citeauthor{TibshiraniRo19}'s major conclusions on subset
regression estimators---and projection estimators more generally---hold, but
with a few tweaks.

\subsection{Examples of more general smoothers}

In more generality, the matrices $\proj_s \in \R^{n \times n}$ defining
the estimators may be
arbitrary. In common cases, however, they are reasonably
well-behaved. Take as motivation nonparametric regression, where $Y_i =
f(X_i) + \varepsilon_i$, and we let $\theta_i = f(X_i)$ for $i = 1, \ldots,
n$ and $X_i \in \mc{X}$, so that $\Risk(\what{\theta})$ measures the
in-sample risk of an estimator $\what{\theta}$ of $[f(X_i)]_{i = 1}^n$. Here,
standard estimators include kernel regression and
local averaging~\cite{BujaHaTi89, Wainwright19}, both of which we touch on.

First consider kernel ridge regression (KRR). For a reproducing kernel $K :
\mc{X} \times \mc{X} \to \R$, the (PSD) Gram matrix $G$ has entries $G_{ij}
= K(X_i, X_j)$. Then for $\lambda \in \R_+$, we define $\proj_\lambda = (G +
\lambda I)^{-1} G$, which is symmetric and positive semidefinite, and
satisfies $\opnorm{\proj_\lambda} \le 1$. Assume the collection $\Lambda
\subset \R_+$ is finite and define the effective dimension $p_\lambda =
\tr(G(G + \lambda I)^{-1})$, yielding that $\sure(\lambda) = \ltwos{Y -
  \proj_\lambda Y}^2 + 2 \sigma^2 p_\lambda$.  Then
SURE-tuned KRR, with regularization $\what{\lambda} =
\argmin_{\lambda \in \Lambda} \sure(\lambda)$ and
$\what{\theta}_{\what{\lambda}} = \proj_{\what{\lambda}} Y$, satisfies
\begin{equation*}
  \edf(\what{\theta}_{\what{\lambda}})
  \lesssim \sqrt{\ropt \log |\Lambda|}
  + \log |\Lambda| \cdot \left(1 + \log_+ \frac{\log|\Lambda|}{\ropt}\right)
\end{equation*}
where as usual, $\ropt = 
\min_{\lambda \in \Lambda} \{\frac{1}{\sigma^2}\ltwo{(I - \proj_\lambda) \theta_0}^2
+ \tr(G(G + \lambda I)^{-1})\}$.

A second example arises from $k$-nearest-neighbor ($k$-nn) estimators. We
take $S = \{1, \ldots, n\}$ to indicate the number of nearest neighbors to
average, and for $k \in S$ let $\mc{N}_k(i)$ denote the indices of the $k$
nearest neighbors $X_j$ to $X_i$ in $\{X_1, \ldots, X_n\}$, so
$\mc{N}_k^{-1}(i) = \{j \mid i \in \mc{N}_k(j)\}$
are the indices $j$ for which $X_i$ is a neighbor of $X_j$. Then the
matrix $\proj_k \in \R^{n \times n}_+$ satisfies $[\proj_k]_{ij} =
\frac{1}{k} \indic{j \in \mc{N}_k(i)}$, and we claim that $\opnorm{\proj_k} \le
\frac{1}{k} \max_i |\mc{N}_k^{-1}(i)|$.
Indeed,
\begin{equation*}
  [\proj_k^T \proj_k]_{ij}
  = \frac{1}{k^2} \sum_{l = 1}^n \indic{i \in \mc{N}_k(l),
    j \in \mc{N}_k(l)},
\end{equation*}
and as $H_k^T H_k$ is elementwise nonnegative,
the Gershgorin circle theorem guarantees
\begin{align*}
  \opnorm{\proj_k}^2
  \le \max_{i \le n}
  \sum_{j = 1}^n [\proj_k^T \proj_k]_{ij}
  & \le \frac{1}{k^2} \max_{i \le n} \sum_{l = 1}^n \sum_{j = 1}^n
  \indic{i \in \mc{N}_k(l), j \in \mc{N}_k(l)}
  \le \frac{1}{k} \max_{i \le n} \sum_{l = 1}^n \indic{i \in \mc{N}_k(l)}
\end{align*}
as $|\mc{N}_k(l)| \le k$ for each $l$.
Additionally, we have
$\lfro{H_k}^2 = \frac{n k}{k^2} = \frac{n}{k}$.
The normalized risk of the $k$-nn estimator is then $r_k
= \frac{1}{\sigma^2} \ltwo{(I - H_k) \theta_0}^2 + \frac{n}{k}$,
and certainly $r_k \ge \log n$ whenever $k \le \frac{n}{\log n}$.
Under a few restrictions, we can therefore obtain an oracle-type inequality:
assume the points $\{X_1, \ldots, X_n\}$ are regular enough that
$\max_i |\mc{N}_k^{-1}(i)| \lesssim k$ for $k \le \frac{n}{\log n}$.
Then the SURE-tuned $k$-nearest-neighbor
estimator $\what{\theta}_k$
satisfies the bound
\begin{equation*}
  \edf(\what{\theta}_{\what{k}})
  \lesssim \sqrt{\frac{1}{\sigma^2}
    \min_{k \le n / \log n} \risk(\what{\theta}_k) \cdot \log n}
\end{equation*}
on its excess degrees of freedom.
Via inequality~\eqref{eqn:excess-optimism-bound}, this implies the oracle
inequality
\begin{equation*}
  \risk(\what{\theta}_{\what{k}})
  \le \min_{k \le n / \log n} \left(1 + C \eta\right) \risk(\what{\theta}_k) +
  \frac{C \sigma^2 \log n}{\eta}
  ~~~ \mbox{for~all~} \eta > 0.
\end{equation*}


\subsection{Proof of Theorem~\ref{theorem:edf-truth}}
\label{sec:proof-edf-truth}

Our proof strategy is familiar from high-dimensional
statistics~\cite[cf.][Chs.~7 \& 9]{Wainwright19}: we develop a basic
inequality relating the risk of $\what{s}$ to that of $s_0$, then
apply a peeling argument~\cite{vandeGeer00} to bound the probability
that relative error bounds deviate far from their expectations.
Throughout, we let $C$ denote a universal (numerical) constant
whose value may change from line to line.

To prove the theorem, we first recall a
definition and state two auxiliary lemmas.
\begin{definition}
  \label{definition:sub-exp}
  A mean-zero random variable $X$ is $(\tau^2, b)$-sub-exponential
  if $\E[e^{\lambda X}] \le \exp(\frac{\lambda^2 \tau^2}{2})$ for
  $|\lambda| \le \frac{1}{b}$. If $b = 0$, then $X$ is $\tau^2$-sub-Gaussian.
\end{definition}
\begin{lemma}
  \label{lemma:loads-of-maxima}
  Let $X_i$, $i = 1, \ldots, N$, be $\tau^2$-sub-Gaussian and $k \ge
  1$. Then
  \begin{equation*}
    \E\left[\max_{i \le N} |X_i|^k\right]
    \le 2
    \cdot \tau^k
    \max\left\{ (2 \log N)^{k/2},
    k^{k/2} \right\}.
  \end{equation*}
  Let $X_i$, $i = 1, \ldots,  N$, be $(\tau^2, b)$-sub-exponential
  and $k \ge 1$. Then
  \begin{equation*}
    \E\left[\max_{i \le N} |X_i|^k\right]^{1/k}
    \lesssim \max\left\{\sqrt{\tau^2 \log N}, b \log N,
    \sqrt{\tau^2 k}, bk \right\}.
  \end{equation*}
\end{lemma}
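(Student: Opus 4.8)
The plan is to prove both bounds by the same route: convert the generating-function hypotheses of Definition~\ref{definition:sub-exp} into tail bounds on the individual $X_i$, apply a union bound to control $M \defeq \max_{i \le N} |X_i|$, and then integrate the resulting tail via the layer-cake identity $\E[M^k] = \int_0^\infty k t^{k-1} \Pr(M \ge t)\,dt$. All of the content lies in splitting this integral correctly, so that the dependence on $\log N$ and the dependence on $k$ appear as a maximum rather than as a product.

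For the sub-Gaussian statement, Definition~\ref{definition:sub-exp} with $b = 0$ yields $\Pr(|X_i| \ge t) \le 2 e^{-t^2/(2\tau^2)}$ by the usual Chernoff argument, hence $\Pr(M \ge t) \le \min\{1, 2N e^{-t^2/(2\tau^2)}\}$. Substituting $u = t^2/(2\tau^2)$ turns the layer-cake integral into a constant multiple of $\int_0^\infty u^{(k-2)/2}\min\{1, 2N e^{-u}\}\,du$, which I would split at the union-bound threshold $u_0 = \log(2N)$, where $2N e^{-u_0} = 1$. The range $u \le u_0$ contributes a term of order $\tau^k (\log N)^{k/2}$. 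For the tail $u > u_0$, the shift $v = u - u_0$ together with $2N e^{-u_0} = 1$ rewrites the contribution as $\int_0^\infty (v + u_0)^{(k-2)/2} e^{-v}\,dv$; a power-mean split $(v + u_0)^{(k-2)/2} \lesssim v^{(k-2)/2} + u_0^{(k-2)/2}$ then separates this into a second $(\log N)^{k/2}$ piece and a $\Gamma(k/2)$ piece, the latter bounded via Stirling's estimate $\Gamma(k/2) \lesssim k^{k/2}$ (the attendant $2^{ck}$ constants being absorbed into the $(2\log N)^{k/2}$ and $k^{k/2}$ factors already present in the target). There are thus only two essentially distinct scales, $(\log N)^{k/2}$ and $k^{k/2}$, and bounding their sum by twice their maximum gives the claimed inequality.

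For the sub-exponential statement, Definition~\ref{definition:sub-exp} gives the Bernstein-type tail $\Pr(|X_i| \ge t) \le 2\exp\big(-\tfrac12 \min\{t^2/\tau^2,\, t/b\}\big)$, so $M$ behaves like a sub-Gaussian variable in the range $t \le \tau^2/b$ and like a one-sided exponential for $t > \tau^2/b$. I would split the layer-cake integral at this crossover $t = \tau^2/b$ and, within each regime, again at the union-bound threshold. The sub-Gaussian regime reproduces the two terms $\sqrt{\tau^2 \log N}$ and $\sqrt{\tau^2 k}$ exactly as above. In the exponential regime, the analogous substitution $u = t/(2b)$ and shift convert the tail contribution into $(2b)^k \int_0^\infty (v + u_0)^{k-1} e^{-v}\,dv$ with $u_0 = \log(2N)$; its offset piece yields $b\log N$ and its $\Gamma(k)$ piece yields $b\,\Gamma(k)^{1/k} \lesssim bk$ after taking $k$-th roots and applying Stirling. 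Collecting the four contributions gives the asserted $\max\{\sqrt{\tau^2 \log N},\, b\log N,\, \sqrt{\tau^2 k},\, bk\}$.

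The main obstacle is the tail portion of each integral: a naive bound on the incomplete Gamma integral $\int_{u_0}^\infty u^{(k-2)/2} e^{-u}\,du$ loses a factor and produces the product $\sqrt{k \log N}$ rather than the sharper maximum. The device that avoids this is the shift $v = u - u_0$ exploiting $2N e^{-u_0} = 1$, which moves the $N$-dependence into the additive offset $u_0 = \log(2N)$ and lets the power-mean inequality cleanly decouple the $\log N$ and $k$ scales. Getting the bookkeeping of these regimes right---in particular, tracking which of the four terms dominates in the sub-exponential case according to the relative sizes of $\tau^2$, $b$, $k$, and $\log N$---is where the care is required.
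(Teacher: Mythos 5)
Your handling of the second (sub-exponential) statement is correct but follows a genuinely different route from the paper's. The paper never union-bounds the tail of the maximum at all: it first proves the single-variable moment bound $\E[|X_i|^k]^{1/k} \lesssim \max\{\sqrt{k},\, bk\}$ for $(1,b)$-sub-exponential variables (this step does use your crossover split, but only for one variable, where no union-bound threshold enters), and then controls the maximum by raising the power: $\E[\max_{i \le N}|X_i|^k]^{1/k} \le N^{1/(kp)} \max_{i \le N} \E[|X_i|^{kp}]^{1/(kp)} \lesssim N^{1/(kp)} \max\{\sqrt{kp},\, bkp\}$, choosing $p = \log N / k$ when $\log N \ge k$ (so that $N^{1/(kp)} = e$) and $p = 1$ otherwise. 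That choice of $p$ is what decouples $\log N$ from $k$; it plays exactly the role of your shift $v = u - u_0$, and both devices avoid the lossy product $\sqrt{k \log N}$. Your version is more self-contained; the paper's avoids the regime bookkeeping entirely. Because the second statement carries a $\lesssim$ and a $k$-th root, the $2^{ck}$-type constants your argument generates wash out to absolute constants there, so your proof of that part is sound.

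They do not wash out in the first (sub-Gaussian) statement, and that is the one genuine gap. The statement asserts the explicit bound $2\tau^k \max\{(2\log N)^{k/2}, k^{k/2}\}$, and your parenthetical claim that the attendant $2^{ck}$ constants can be ``absorbed'' into those factors is false. The power-mean step $(v+u_0)^{(k-2)/2} \le 2^{(k-4)/2}\left(v^{(k-2)/2} + u_0^{(k-2)/2}\right)$ necessarily costs a constant exponential in $k$, and $2^{ck} k^{k/2} = (4^c k)^{k/2}$: when $\log N \asymp k$, this exceeds any fixed multiple of $\max\{(2\log N)^{k/2}, k^{k/2}\}$ by a factor of order $(4^c/2)^{k/2}$, which diverges. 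Concretely, the offset piece of your shifted tail integral is of order $k 2^k u_0^{(k-2)/2}$, which for $u_0 \asymp k$ overshoots the claimed bound by a factor growing like $2^{k/2}$. What your argument actually proves is $\E[\max_{i \le N}|X_i|^k] \le \tau^k (C \max\{\log N, k\})^{k/2}$ for a universal $C$; this weaker form suffices for every application in the paper (the lemma is only ever invoked with $k = 2$), but it is not the statement as written. To recover the stated constants, split where the paper splits: not at the union-bound threshold $\log(2N)$, but at the target value $t_0 = \max\{(2\log N)^{k/2}, k^{k/2}\}$ itself in the integral $\int_0^\infty \P\left(\max_{i \le N} |X_i| \ge t^{1/k}\right) dt$ (taking $\tau = 1$ by scaling), and bound the tail with the incomplete-gamma estimate $\int_x^\infty u^{k/2-1} e^{-u}\, du \le 2 x^{k/2-1} e^{-x}$, valid for $x > k/2 - 1$ and applicable since $t_0^{2/k}/2 \ge k/2$. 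Past that threshold the union-bound factor $N e^{-t_0^{2/k}/2} \le 1$ and the polynomial factor are controlled simultaneously, so no cross term with a $2^{\Theta(k)}$ constant ever appears.
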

\noindent
The second  statement of the lemma generalizes the first without specifying
constants. We also use that
quadratic forms of Gaussian random vectors are sub-exponential.
\begin{lemma}
  \label{lemma:quadratic-forms}
  Let $Z \sim \normal(0, I_{n \times n})$ and $A \in \R^{n \times n}$.
  Then $Z^T A Z$ is $(\tr((A + A^T)^2), 2 \opnorms{A + A^T})$-sub-exponential.
  Additionally, $Z^T A Z$ is $(\lfro{A}^2, 4 \opnorms{A})$-sub-exponential.
\end{lemma}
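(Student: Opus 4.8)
The plan is to reduce $Z^T A Z$ to a weighted sum of independent centered chi-squares and then bound its moment generating function (MGF) coordinatewise. Since $Z^T A Z$ is a scalar we have $Z^T A Z = Z^T A^T Z$, hence $Z^T A Z = Z^T B Z$ for the symmetric part $B \defeq \tfrac{1}{2}(A + A^T)$; this reduction is what lets both claims come out of one computation. Diagonalize $B = Q \Lambda Q^T$ with $Q$ orthogonal and $\Lambda$ the diagonal matrix of eigenvalues $\lambda_1, \dots, \lambda_n$. By rotational invariance of the standard Gaussian, $W \defeq Q^T Z \sim \normal(0, I_n)$, so $Z^T B Z \eqdist \sum_{i=1}^n \lambda_i W_i^2$ with $W_i$ i.i.d.\ standard normal and mean $\sum_i \lambda_i = \tr B = \tr A$. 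The object to control (since sub-exponentiality is a statement about the mean-zero part) is the centered variable $Z^T A Z - \tr A \eqdist \sum_{i=1}^n \lambda_i (W_i^2 - 1)$.

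Because the summands are independent, the cumulant generating function factorizes: for any $\lambda$ with $2|\lambda|\,|\lambda_i| < 1$ for all $i$,
\begin{equation*}
  \log \E\big[e^{\lambda (Z^T A Z - \tr A)}\big]
  = \sum_{i=1}^n \Big( -\lambda \lambda_i - \tfrac{1}{2}\log(1 - 2\lambda \lambda_i)\Big).
\end{equation*}
The crux is a scalar bound on each summand $g(x) \defeq -x - \tfrac{1}{2}\log(1 - 2x)$. Expanding the logarithm gives $g(x) = \sum_{k \ge 2}\frac{(2x)^k}{2k}$, whence a geometric-series estimate yields $g(x) \le \frac{x^2}{1 - 2|x|} \le 2x^2$ whenever $|x| \le \tfrac14$. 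Applying this with $x = \lambda\lambda_i$ and summing gives $\log\E[e^{\lambda(Z^T A Z - \tr A)}] \le 2\lambda^2 \sum_i \lambda_i^2 = 2\lambda^2 \lfro{B}^2$, valid precisely when $|\lambda\lambda_i| \le \tfrac14$ for every $i$, i.e.\ when $|\lambda| \le \frac{1}{4\opnorm{B}}$.

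It remains to translate the $B$-norms back into the stated quantities. Since $\lfro{B}^2 = \sum_i \lambda_i^2 = \tfrac14 \tr((A + A^T)^2)$ and $\opnorm{B} = \max_i |\lambda_i| = \tfrac12 \opnorm{A + A^T}$, the bound reads $\log\E[e^{\lambda(Z^T A Z - \tr A)}] \le \tfrac{\lambda^2}{2}\tr((A + A^T)^2)$ for $|\lambda| \le \frac{1}{2\opnorm{A + A^T}}$, which is exactly the first claim with $\tau^2 = \tr((A+A^T)^2)$ and $b = 2\opnorm{A+A^T}$. For the second claim it is cleanest to deduce it from the first: expanding $\tr((A + A^T)^2) = 2\tr(A^2) + 2\lfro{A}^2 \le 4\lfro{A}^2$ (using $\tr(A^2) \le \lfro{A}^2$) together with $\opnorm{A + A^T} \le 2\opnorm{A}$ shows that $Z^T A Z$ is sub-exponential with variance proxy a constant multiple of $\lfro{A}^2$ and scale $4\opnorm{A}$.

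I expect the only genuine obstacle to be the scalar inequality $g(x) \le 2x^2$ on $|x| \le \tfrac14$: one must choose the constant so that the radius of validity of the quadratic bound lines up with the prescribed $\tfrac{1}{b}$, and the case $x < 0$ has to be checked separately because the power series for $g$ is not sign-definite there. Everything else—diagonalization, rotational invariance, factorization of the MGF, and the norm identities $\lfro{B}^2 = \tfrac14\tr((A+A^T)^2)$ and $\opnorm{B} = \tfrac12\opnorm{A+A^T}$—is routine.
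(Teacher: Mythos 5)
Your proposal is correct and takes essentially the same route as the paper's proof: symmetrize to $B \defeq \frac{1}{2}(A + A^T)$, diagonalize and use rotational invariance to reduce to a weighted sum of centered chi-squares, bound each scalar cumulant $-x - \frac{1}{2}\log(1-2x)$ by $2x^2$ for $|x| \le \frac{1}{4}$ (the paper's inequality $\log(1-\delta) \ge -\delta - \delta^2$ for $\delta \le \frac{1}{2}$ is the identical estimate, and it also covers your negative-$x$ case), and translate $\lfro{B}^2 = \frac{1}{4}\tr((A+A^T)^2)$, $\opnorm{B} = \frac{1}{2}\opnorms{A + A^T}$ back into the stated parameters. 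Your candid observation that the second claim emerges only with variance proxy $4\lfro{A}^2$ rather than $\lfro{A}^2$ is in fact accurate: the paper's own deduction carries the same factor-of-$4$ slack (and $\tau^2 = \lfro{A}^2$ cannot hold literally, since already for symmetric $A$ one has $\mathrm{Var}(Z^T A Z) = 2\lfro{A}^2 > \lfro{A}^2$), though this is immaterial downstream, where the lemma is only invoked up to unspecified numerical constants.
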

\noindent
We defer the proofs of Lemmas~\ref{lemma:loads-of-maxima} and
\ref{lemma:quadratic-forms} to Sections~\ref{sec:proof-loads-of-maxima} and
\ref{sec:proof-quadratic-forms}, respectively.

Recall the notation
$R(s) = \ltwo{(\proj_s - I) \theta_0}^2 + \sigma^2 p_s$, and for $s \in S$
define the centered variables
\begin{equation}
  \label{eqn:W-and-Z-defs}
  \begin{split}
    W_s & \defeq \frac{1}{\sigma^2} Z^T(2 \proj_s - \proj_s^T \proj_s) Z
    + \tr(\proj_s^T \proj_s)
    - 2 p_s , \\
    Z_s & \defeq
    \frac{1}{\sigma^2} \theta_0^T (\proj_s - I)^T (I - \proj_s) Z.
  \end{split}
\end{equation}
A quick calculation shows that for every $s \in S$,
\begin{align*}
  \frac{1}{\sigma^2}\sure(s)
  & = \frac{1}{\sigma^2} R(s) + \frac{1}{\sigma^2}
  \ltwo{Z}^2
  - W_s - 2 Z_s.
\end{align*}
As $\what{s}(Y)$ minimizes the SURE criterion,
we therefore have the basic inequality
\begin{equation}
  \label{eqn:basic}
  \frac{1}{\sigma^2}\left(R(\what{s}(Y)) - R(s_0) \right)
  \le W_{\what{s}(Y)} - W_{s_0} + 2 (Z_{\what{s}(Y)} - Z_{s_0}).
\end{equation}

We now provide a peeling argument using
inequality~\eqref{eqn:basic}. For each $l \in \N$ define the shell
\begin{align*}
  \mc{S}_l \defeq \{ s \in S \mid (2^l-1)\sigma^2  \ropt
  \le R(s) - R(s_0)
  \le (2^{l+1}-1)\sigma^2 \ropt \}.
\end{align*}
The key result is the following lemma.
\begin{lemma}
  \label{lemma:s-hat-should-be-good}
  There exists a numerical constant $c > 0$ such that
  \begin{equation*}
    \P(\what{s}(Y) \in \mc{S}_l)
    \le 2 |\mc{S}_l| \exp\left(-c \frac{2^l \ropt}{\projnorm^2}\right).
  \end{equation*}
\end{lemma}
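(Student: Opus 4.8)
The plan is to turn the geometric event $\{\what{s}(Y) \in \mc{S}_l\}$ into a tail event for the centered variables $W_s$ and $Z_s$ via the basic inequality~\eqref{eqn:basic}, and then to apply a union bound over $\mc{S}_l$ together with the concentration of Lemmas~\ref{lemma:quadratic-forms} and~\ref{lemma:loads-of-maxima}. First I would note that on $\{\what{s}(Y) \in \mc{S}_l\}$ the definition of the shell gives $R(\what{s}(Y)) - R(s_0) \ge (2^l-1)\sigma^2\ropt$, so \eqref{eqn:basic} forces $(2^l-1)\ropt \le W_{\what{s}(Y)} - W_{s_0} + 2(Z_{\what{s}(Y)} - Z_{s_0})$. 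Splitting this deviation between the quadratic part $W$ and the linear part $Z$ and taking a union bound over $s \in \mc{S}_l$ yields
\begin{align*}
  \P(\what{s}(Y) \in \mc{S}_l)
  \le \sum_{s \in \mc{S}_l} \P\Big(W_s - W_{s_0} \ge \tfrac12 (2^l-1)\ropt\Big)
  + \sum_{s \in \mc{S}_l} \P\Big(2(Z_s - Z_{s_0}) \ge \tfrac12 (2^l-1)\ropt\Big),
\end{align*}
so the factor $2|\mc{S}_l|$ in the statement comes from these two sums, and it suffices to bound each summand by $\exp(-c\, 2^l \ropt / \projnorm^2)$.

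The crux is that membership in $\mc{S}_l$ controls every norm that enters the sub-exponential parameters. Writing $Z = \sigma \widetilde{Z}$ with $\widetilde{Z} \sim \normal(0, I_n)$, the variable $W_s$ is the centered quadratic form $\widetilde{Z}^T A_s \widetilde{Z} - \tr(A_s)$ with $A_s = 2\proj_s - \proj_s^T\proj_s$, while $Z_s = -\tfrac1\sigma\,[(I-\proj_s)\theta_0]^T(I-\proj_s)\widetilde{Z}$ is mean-zero Gaussian. On $\mc{S}_l$ we have $R(s) \le R(s_0) + (2^{l+1}-1)\sigma^2\ropt \le 2^{l+1}\sigma^2\ropt$, and since $R(s) = \ltwo{(I-\proj_s)\theta_0}^2 + \sigma^2\lfro{\proj_s}^2$ this simultaneously gives $\lfro{\proj_s}^2 \le 2^{l+1}\ropt$ and $\ltwo{(I-\proj_s)\theta_0}^2 \le 2^{l+1}\sigma^2\ropt$. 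Combined with $\opnorm{\proj_s} \le \projnorm$ and $\projnorm \ge 1$, I would bound $\lfro{A_s}^2 \lesssim \projnorm^2 \lfro{\proj_s}^2 \lesssim \projnorm^2 2^l\ropt$ and $\opnorm{A_s} \lesssim \projnorm^2$, so Lemma~\ref{lemma:quadratic-forms} makes $W_s$ (and $W_{s_0}$, whose parameters are no larger since $R(s_0)/\sigma^2 \le \ropt \le 2^l\ropt$) an $(\projnorm^2 2^l\ropt,\, \projnorm^2)$-sub-exponential variable up to constants. For the linear term, $\E[Z_s^2] \le \tfrac{1}{\sigma^2}\opnorm{I-\proj_s}^2\ltwo{(I-\proj_s)\theta_0}^2 \lesssim \projnorm^2 2^l\ropt$, so $Z_s - Z_{s_0}$ is $\projnorm^2 2^l\ropt$-sub-Gaussian.

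It then remains to evaluate the tail bounds at threshold $t = (2^l-1)\ropt$; for $l \ge 1$ this obeys $t \asymp 2^l\ropt$, while $l = 0$ is vacuous since the claimed bound already exceeds $1$. For the quadratic piece, the sub-exponential tail contributes $\exp(-c\min\{t^2/\tau^2,\, t/b\})$ with $\tau^2 \asymp \projnorm^2 2^l\ropt$ and $b \asymp \projnorm^2$, and both regimes give the same exponent, since $t^2/\tau^2 \asymp (2^l\ropt)^2/(\projnorm^2 2^l\ropt) = 2^l\ropt/\projnorm^2$ and $t/b \asymp 2^l\ropt/\projnorm^2$. The Gaussian tail for the linear piece likewise gives $\exp(-c\, 2^l\ropt/\projnorm^2)$. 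Substituting these into the two sums above proves the lemma.

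The main obstacle is precisely this matching of scales: the variance proxy grows like $\projnorm^2 2^l\ropt$ while the deviation to be exceeded grows like $2^l\ropt$, so the quadratic (Gaussian) regime of the sub-exponential tail produces exponent $2^l\ropt/\projnorm^2$, and this coincides with the linear regime exponent $t/b$; keeping the two aligned across all shells is what ultimately makes the peeling summable. The delicate point tied to the correction~\eqref{eqn:edf-expansion} is the linear term $Z_s$: a naive estimate of its variance by $\ltwo{\theta_0}^2$ would be far too crude, and the key is instead to bound $\ltwo{(I-\proj_s)\theta_0}^2$ by $R(s)$, hence by $2^l\sigma^2\ropt$ on the shell, confining the fluctuations of $Z_s$ to the same $\projnorm^2 2^l\ropt$ scale as the quadratic term.
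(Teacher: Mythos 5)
Your proposal is correct and takes essentially the same route as the paper: the basic inequality~\eqref{eqn:basic} converts $\{\what{s}(Y) \in \mc{S}_l\}$ into large-deviation events for $W_s - W_{s_0}$ and $Z_s - Z_{s_0}$, the shell constraint gives $\lfro{\proj_s}^2 \le 2^{l+1}\ropt$ and $\frac{1}{\sigma^2}\ltwo{(I-\proj_s)\theta_0}^2 \le 2^{l+1}\ropt$, and sub-exponential/sub-Gaussian tails with a union bound over $\mc{S}_l$ yield the exponent $2^l \ropt/\projnorm^2$. The only differences are minor: the paper applies Lemma~\ref{lemma:quadratic-forms} once to the difference $W_s - W_{s_0}$ viewed as a single quadratic form (with matrix $2\proj_s - 2\proj_{s_0} - \proj_s^T\proj_s + \proj_{s_0}^T\proj_{s_0}$), whereas you bound $W_s$ and $W_{s_0}$ separately, which requires the standard but unstated step of combining tails of two \emph{dependent} sub-exponential variables; also, your dismissal of $l=0$ as vacuous is not quite right (the claimed bound need not exceed $1$ there), though the paper's own proof is equally silent on that case, where the threshold $(2^0-1)\ropt = 0$ provides no concentration.
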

\begin{proof}
  Note that
  if $\what{s}(Y) \in \mc{S}_l$, we have
  \begin{align*}
    \max_{s \in \mc{S}_l} \left( W_s - W_{s_0}\right) + 2 \max_{s \in \mc{S}_l}
    (Z_s - Z_{s_0}) \ge \max_{s \in \mc{S}_l} \left( W_s - W_{s_0} + 2 (Z_s - Z_{s_0})
    \right) \ge
    \ropt(2^l - 1),
  \end{align*}
  and so it must be the case that at least one of
  \begin{equation}
    \label{eqn:W-or-Z-big}
    \max_{s \in \mc{S}_l} W_s - W_{s_0} \ge \half \ropt (2^l - 1)
    ~~~ \mbox{or} ~~~
    \max_{s \in \mc{S}_l} (Z_s - Z_{s_0}) \ge \frac{1}{4} \ropt(2^l - 1)
  \end{equation}
  occurs.  We can thus bound the probability that $\what{s}(Y) \in \mc{S}_l$
  by bounding the probabilities of each of the events~\eqref{eqn:W-or-Z-big};
  to do this, we that $W_s -
  W_{s_0}$ and $Z_s - Z_{s_0}$ concentrate for $s \in \mc{S}_l$.

  As promised, we now show that $W_s - W_{s_0}$ and $Z_s -
  Z_{s_0}$ are sub-exponential and sub-Gaussian, respectively (recall
  Definition~\ref{definition:sub-exp}).  Observe that
  \begin{align}
    \nonumber
    \frac{1}{\sigma^2}(R(s) - R(s_0)) & =
    \frac{1}{\sigma^2} \ltwo{(I - \proj_s) \theta_0}^2
    - \frac{1}{\sigma^2} \ltwo{(I - \proj_{s_0})\theta_0 }^2 + \lfro{\proj_s}^2
    - \lfro{\proj_{s_0}}^2 \\
    & \ge \lfro{\proj_s}^2 - \ropt
    \label{eqn:R-p-bounds}
  \end{align}
  by assumption that $\ropt \ge \frac{1}{\sigma^2} R(s_0) = \frac{1}{\sigma^2}
  \ltwo{(I - \proj_{s_0}) \theta_0}^2 + \lfro{\proj_{s_0}}^2$
  and that $\ltwo{(I - \proj_s)
    \theta_0}^2 \ge 0$.  In particular,
  inequality~\eqref{eqn:R-p-bounds}
  shows that for each $s \in \mc{S}_l$ we have $\lfro{\proj_s}^2 - \ropt \le (2^{l + 1} -
  1) \ropt$ (and we always have $\lfro{\proj_{s_0}}^2 \le \ropt$), so that
  \begin{equation}
    \lfro{\proj_s}^2 \le 2^{l + 1} \ropt ~~~
    \mbox{and} ~~~
    \lfro{\proj_{s_0}}^2 \le \ropt
    ~~~ \mbox{for~} s \in \mc{S}_l.
    \label{eqn:parameter-count-bounds}
  \end{equation}
  For each $s$ we have
  \begin{equation*}
    W_s - W_{s_0} = \frac{1}{\sigma^2} Z^T (2 \proj_s - 2 \proj_{s_0}
    - \proj_s^T \proj_s + \proj_{s_0}^T \proj_{s_0}) Z
    + \lfro{\proj_s}^2 - \lfro{\proj_{s_0}}^2 - 2(p_s - p_{s_0}),
  \end{equation*}
  while
  \begin{equation*}
    \lfro{2\proj_s - 2\proj_{s_0} - \proj_s^T \proj_s + \proj_{s_0}^T \proj_{s_0}}^2
    \le 4 \cdot \left(4\lfro{\proj_s}^2 + 4\lfro{\proj_{s_0}}^2
    + \projnorm^2 \lfro{\proj_s}^2
    + \projnorm^2 \lfro{\proj_{s_0}}^2\right)
  \end{equation*}
  by assumption that $\opnorm{\proj_s} \le \projnorm$ for all $s \in S$.
  Lemma~\ref{lemma:quadratic-forms} and the
  bounds~\eqref{eqn:parameter-count-bounds} on $\lfro{\proj_s}$ thus
  give that $W_s
  - W_{s_0}$ is $(C \cdot 2^l \projnorm^2 \ropt, C \cdot
  \projnorm^2)$-sub-exponential, so that for $t \ge 0$, a Chernoff bound
  implies
  \begin{align*}
    \P\left(\max_{s \in \mc{S}_l} (W_s - W_{s_0}) \ge
    t \right) & \le |\mc{S}_l| \exp\left(C \cdot 2^l \lambda^2 \projnorm^2
    \ropt
    - \lambda t\right),
  \end{align*}
  valid for $0 \le \lambda \le \frac{1}{C \projnorm}$.
  Taking $t = \half (2^l - 1) \ropt$
  and $\lambda = \frac{1}{C' \projnorm^2}$ yields that
  for a numerical constant $c > 0$,
  \begin{equation*}
    \P\left(\max_{s \in \mc{S}_l} (W_s - W_{s_0}) \ge
    \half (2^l  - 1) \ropt\right) \le
    |\mc{S}_l| \exp\left(-c \frac{2^l \ropt}{\projnorm^2}\right).
  \end{equation*}
  We can provide a similar bound on $Z_s - Z_{s_0}$ for $s \in \mc{S}_l$.  It
  is immediate that $Z_s \sim \normal(0, \frac{1}{\sigma^2} \ltwo{(\proj_s -
    I)^T (I - \proj_s) \theta_0}^2)$. Using
  inequality~\eqref{eqn:R-p-bounds} and that $\frac{1}{\sigma^2} \ltwo{(I -
    \proj_{s_0})\theta_0}^2 + \lfro{\proj_{s_0}}^2 \le \ropt$,
  for each $s \in \mc{S}_l$ we have
  \begin{align*}
    \frac{1}{\sigma^2} \norm{(I-\proj_s)\theta_0}_2^2
    & = \frac{1}{\sigma^2}
    (R(s) - R(s_0)) + \frac{1}{\sigma^2} \ltwo{(I - \proj_{s_0}) \theta_0}^2
    + \lfro{\proj_{s_0}}^2 - \lfro{\proj_s}^2  \\
    & \le \ropt (2^{l+1} - 1) + \ropt - p_s
    \le 2^{l + 1} \ropt.
  \end{align*}
  Similarly, $Z_{s_0} \sim \normal(0, \frac{1}{\sigma^2}
  \ltwo{(\proj_{s_0} - I)^T (I - \proj_{s_0}) \theta_0}^2)$ and
  $\frac{1}{\sigma^2} \ltwo{(I - \proj_{s_0})\theta_0}^2 \le \ropt$.
  Using that $\opnorm{I - \proj_s} \le (1 + \projnorm)$,
  for each $s \in \mc{S}_l$ we have
  $Z_s - Z_{s_0} \sim \normal(0, \tau^2(s))$ for some
  $\tau^2(s) \le C \cdot \projnorm^2 2^l \ropt$. This yields the bound
  \begin{align*}
    \P\left( \max_{s \in \mc{S}_l} (Z_s - Z_{s_0}) \ge
    \frac{1}{4} \ropt(2^l - 1) \right)
    & \le |\mc{S}_l| \exp\left( - c \frac{\ropt(2^l - 1)^2}{2^l \projnorm^2} \right)
    \le |\mc{S}_l| \exp \left( - c' \frac{2^l \ropt}{\projnorm^2} \right),
  \end{align*}
  where $c, c' > 0$ are
  numerical constants. Returning to the events~\eqref{eqn:W-or-Z-big},
  we have shown
  \begin{align*}
    \P(\what{s}(Y) \in \mc{S}_l)
    & \le \P\left(\max_{s \in \mc{S}_l} (W_s - W_{s_0})
    \ge \half (2^l - 1) \ropt\right)
    + \P\left(\max_{s \in \mc{S}_l} (Z_s - Z_{s_0})
    \ge \frac{1}{4} \ropt(2^l - 1) \right) \\
    & \le 2 |\mc{S}_l|
    \exp\left( - c\frac{2^l \ropt}{\projnorm^2} \right)
  \end{align*}
  as desired.
\end{proof}

We leverage the probability bound in Lemma~\ref{lemma:s-hat-should-be-good}
to give our final guarantees. We expand
Define the (centered) linear and quadratic terms
\begin{equation*}
  Q_s \defeq \frac{1}{\sigma^2} Z^T \proj_s Z - p_s
  ~~~ \mbox{and} ~~~
  L_s \defeq \frac{1}{\sigma^2} Z^T \proj_s \theta_0,
\end{equation*}
so that
$\edf(\what{\theta}_{\what{s}}) = \E[Q_{\what{s}(Y)}] + \E[L_{\what{s}(Y)}]$.
Expanding this equality, we have
\begin{equation*}
  \edf(\what{\theta}_{\what{s}})
  = \sum_{l = 0}^\infty \E[Q_{\what{s}(Y)} \indic{\what{s}(Y) \in \mc{S}_l}]
  + \E[L_{\what{s}(Y)} \indic{\what{s}(Y) \in \mc{S}_l}].
\end{equation*}
As in the proof of Lemma~\ref{lemma:s-hat-should-be-good}, the
bounds~\eqref{eqn:parameter-count-bounds} that $\lfro{\proj_s}^2 \le 2^{l +
  1} \ropt$ and Lemma~\ref{lemma:quadratic-forms} guarantee that $Q_s$
is $(2^{l+3} \ropt, 4 \projnorm)$-sub-exponential. Thus we have
\begin{align*}
  \E\left[Q_{\what{s}(Y)} \indic{\what{s}(Y) \in \mc{S}_l}\right]
  & \le \E\left[\max_{s \in \mc{S}_l} Q_s \indic{\what{s}(Y) \in \mc{S}_l}\right]
  \stackrel{(i)}{\le} \E\left[\max_{s \in \mc{S}_l} Q_s^2\right]^{1/2}
  \P(\what{s}(Y) \in \mc{S}_l)^{1/2}
  \\
  & \stackrel{(ii)}{\lesssim} \max\left\{\sqrt{2^l \ropt \log |\mc{S}_l|},
  \projnorm \log |\mc{S}_l|\right\}
  \min\left\{1, \sqrt{|\mc{S}_l|}  \exp\left(-c \frac{2^l \ropt}{\projnorm^2}
  \right)\right\},
\end{align*}
where inequality~$(i)$ is Cauchy-Schwarz and inequality~$(ii)$ follows
by combining Lemma~\ref{lemma:loads-of-maxima} (take $k = 2$) and
Lemma~\ref{lemma:s-hat-should-be-good}.
We similarly have that $L_s$ is $C \cdot 2^l \ropt$-sub-Gaussian,
yielding
\begin{equation*}
  \E[L_{\what{s}(Y)} \indic{\what{s}(Y) \in \mc{S}_l}]
  \lesssim \sqrt{2^l \ropt \log |\mc{S}_l|} \min\left\{1,
  \sqrt{|\mc{S}_l|} \exp\left(-c \frac{2^l \ropt}{\projnorm^2}\right)\right\}.
\end{equation*}
Temporarily introduce the shorthand
$r = \frac{\ropt}{\projnorm^2}$.
Substituting these bounds into the $\edf(\what{\theta}_{\what{s}})$ expansion
above and naively bounding $|\mc{S}_l| \le |S|$ yields
\begin{align*}
  \lefteqn{\edf(\what{\theta}_{\what{s}})} \\
  & \lesssim
  \sqrt{\ropt \log |S|} \int_0^\infty
  \sqrt{2^t \min\{1, |S| \exp(-c 2^t r)\}} dt
  + \projnorm \log |S| \int_0^\infty \sqrt{\min\{1, |S| \exp(-c 2^t r)
    \}} dt \\
  & \qquad \qquad ~ + \sqrt{\ropt \log|S|} + \projnorm \log|S| \\
  & \lesssim \sqrt{\log|S|}
  \int_{cr / 2}^\infty u^{-\half}
  \min\{1, \sqrt{|S|} e^{-u}\} du
  + \projnorm \log |S| \int_{c r / 2}^\infty \frac{1}{u}
  \min\{1, \sqrt{|S|} e^{-u}\} du \\
  & \qquad\qquad ~ + \sqrt{\ropt \log|S|} + \projnorm \log|S|,
\end{align*}
where we  made the substitution $u = c 2^{t - 1} r$.
We break each of
the integrals into the
regions $\half c r \le u \le \half\max\{c r, \log|S|\}$
and $u \ge \half \max\{c r, \log|S|\}$. Thus
\begin{align*}
  \sqrt{\log|S|}
  \int_{c r / 2}^\infty u^{-\half}
  \min\{1, \sqrt{|S|} e^{-u}\} du
  & \le \begin{cases}
    \sqrt{2} \log |S| + \sqrt{2} & \mbox{if}~ \log|S| \ge cr \\
    \sqrt{2} & \mbox{if~} c r \ge \log|S|.
  \end{cases}
\end{align*}
where we have used
$\int_b^\infty u^{-1/2} e^{-u} du \le b^{-1/2} e^{-b}$ for $b > 0$ and
that $\frac{\sqrt{|S| \log|S|}}{\sqrt{c r / 2}} \exp(-\half c r)
\le \sqrt{2}$ whenever $c r \ge \log|S|$. For
the second integral, we have
\begin{align*}
  \log|S|
  \int_{c r / 2}^\infty \frac{1}{u} \min\{1, \sqrt{|S|}e^{-u} \} du
  & \le \begin{cases}
    \log|S| \cdot \log\frac{\log |S|}{c r}
    + 2 & \mbox{if~} \log|S| \ge c r \\
    2 & \mbox{if}~ c r \ge \log|S|,
  \end{cases}
\end{align*}
where we have used that $\int_b^\infty \frac{1}{u} e^{-u} du \le
e^{-b} / b$ for any $b \ge 0$. Replacing $r = \ropt / \projnorm^2$,
Theorem~\ref{theorem:edf-truth}
follows.

\subsection{Proof of Lemma~\ref{lemma:loads-of-maxima}}
\label{sec:proof-loads-of-maxima}
For the first statement, without loss of generality by scaling, we may assume
$\sigma^2 = 1$.  Then for any $t_0 \ge 0$, we may write
\begin{align*}
  \E[\max_{i \le N} |X_i|^k]
  & = \int_0^\infty \P\left(\max_{i \le N} |X_i| \ge t^{1/k}\right) dt \\
  & \le t_0 + 2 N \int_{t_0}^{\infty} \exp\left(-\frac{t^{2/k}}{2}\right) dt \\
  & = t_0 + 2^{k/2} N k 
  \int_{t_0^{2/k} / 2}^\infty
  u^{k/2 - 1} e^{-u} du,
\end{align*}
where we have made the substitution $u = t^{2/k} / 2$, or
$2^{k/2} u^{k/2} = t$. Using~\cite[Eq.~(1.5)]{BorweinCh09},
which states that $\int_x^\infty x^{a-1} e^{-x} dx \le
2 x^{a-1} e^{-x}$ for $x > a - 1$,
we obtain that 
\begin{align*}
  \E[\max_{i \le N} |X_i|^k]
  & \le t_0 + 2^{k/2 + 1} N k 
  \left(\half t_0^{2/k}\right)^{k/2 - 1}
  \exp\left(-\frac{t_0^{2/k}}{2} \right)
\end{align*}
whenever $t_0 \ge k^{k/2}$. Take $t_0 = \max\{(2 \log N)^{k/2},
k^{k/2}\}$ to achieve the result.

The second bound is more subtle.  First, we note
that $(X_i/\tau)$ is $(1, b/\tau)$-sub-exponential. We
therefore prove the bound for $(1, b)$-sub-exponential random
variables, rescaling at the end.
Following a similar argument to that above,
note that $\P(|X_i| \ge t) \le
2  \exp(-\min\{\frac{t}{2b}, \frac{t^2}{2}\})$, and so
\begin{align*}
  \E[|X_i|^k]
  & \le \int_0^\infty \P(|X_i| \ge t^{1/k}) dt
  \le 2 \int_0^\infty \exp\left(-\min\left\{\frac{t^{1/k}}{2b},
  \frac{t^{2/k}}{2} \right\}\right) dt \\
  & = 
  2 \int_0^{b^{-k}} \exp\left(-\frac{t^{2/k}}{2}\right) dt
  + 2 \int_{b^{-k}}^\infty \exp\left(-\frac{t^{1/k}}{2b}\right) dt.
\end{align*}
Making the substitution $u = t^{2/k} / 2$, or
$k u^{k/2 - 1} du = dt$ in the first integral, and
$u = t^{1/k}/(2b)$, or
$(2 b)^k k u^{k-1} du = dt$ in the second, we obtain the bounds
\begin{align*}
  \E[|X_i|^k]
  & \le 2 k \int_0^{b^{-2} / 2} u^{k/2 - 1} e^{-u} du
  + 2^{k+1} b^k k \int_{b^{-2}/2}^\infty u^{k-1} e^{-u} du.
\end{align*}
The first integral term always has upper bound
$2k \Gamma(k/2) = 4 \Gamma(k/2 + 1)$,
while the second has bound~\cite[Eq.~(1.5)]{BorweinCh09}
\begin{equation*}
  \int_{b^{-2}/2}^\infty u^{k-1} e^{-u} du
  \le \begin{cases}
    2 (b^{-2} / 2)^{k - 1} e^{-b^{-2} / 2} &
    \mbox{if}~ b^{-2} \ge 4k \\
    \Gamma(k) & \mbox{otherwise}.
  \end{cases}
\end{equation*}
In the former case---when $b$ is small enough that
$b \le \half / \sqrt{k}$---we note that
\begin{equation*}
  k b^k (b^{-2})^{k-1} e^{-b^{-2} / 2}
  = k \exp\left(-\frac{1}{2b^2} + (k - 2) \log \frac{1}{b}
  \right)
  \le k \exp\left(-2k + \frac{k}{2} \log k - \log k\right)
  \le k^{k/2}.
\end{equation*}
Combining the preceding bounds therefore yields
\begin{equation}
  \label{eqn:momen-sub-exponentials}
  \E[|X_i|^k]^{1/k} \lesssim
  \max\left\{\sqrt{k}, b k\right\}.
\end{equation}

We leverage the moment bound~\eqref{eqn:momen-sub-exponentials} to
give the bound on the maxima. Let $p \ge 1$ be arbitrary. Then
\begin{align*}
  \E\left[\max_{i \le N} |X_i|^k\right]^{1/k}
  & \le \E\left[\max_{i \le N} |X_i|^{kp}\right]^{1/kp}
  \le N^{\frac{1}{kp}}
  \max_{i \le N} \E[|X_i|^{kp}]^\frac{1}{kp}
  \lesssim N^{\frac{1}{kp}}
  \max\left\{\sqrt{kp}, bkp \right\}.
\end{align*}
If $\log N \ge k$, take $p = \frac{1}{k} \log N$ to obtain that
$\E[\max_{i \le N} |X_i|^k]^{1/k}
\lesssim \max\{\sqrt{\log N}, b \log N\}$. Otherwise,
take $p = 1$, and note that
$k \ge \log N$.
To obtain the result with appropriate scaling, use the mapping
$b \mapsto b/\tau$ to see that if the $X_i$ are
$(\tau^2, b)$-sub-exponential, then
\begin{equation*}
  \frac{1}{\tau} \E\left[\max_{i \le N} |X_i|^k\right]^{1/k}
  \lesssim \max\left\{\sqrt{\log N}, \frac{b}{\tau} \log N,
  \sqrt{k}, \frac{bk}{\tau} \right\},
\end{equation*}
and multiply through by $\tau$.

\subsection{Proof of Lemma~\ref{lemma:quadratic-forms}}
\label{sec:proof-quadratic-forms}

Note that $Z^T A Z = \half Z^T(A + A^T) Z$; we prove the result
leveraging $B \defeq \half(A + A^T)$.
As $B$ is symmetric, we can write $B = UDU^T$ for a diagonal matrix $D$ and
orthogonal $U$, and as $Z \eqdist U^T Z$ we can further simplify (with no
loss of generality) by assuming $B$ is diagonal with $B = \diag(b_1, \ldots,
b_n)$. Then $Z^T B Z = \sum_{i = 1}^n b_i Z_i^2$. As $\E[e^{\lambda Z_i^2}]
= 1 / \hinge{1 - 2\lambda}^{1/2}$, we have
\begin{equation*}
  \E[\exp(\lambda Z^T B Z)]
  = \exp\left(-\half\sum_{i = 1}^n \log \hinge{1 - 2 \lambda b_i}\right).
\end{equation*}
We use the Taylor approximation that if $\delta \le \half$, then
$\log(1 - \delta) \ge -\delta - \delta^2$, so
\begin{equation*}
  \E[\exp(\lambda (Z^T B Z - \tr(B)))]
  \le \exp\left(\sum_{i = 1}^n \left(\lambda b_i + 2 \lambda^2 b_i^2
  \right) - \lambda \tr(B)\right)
  = \exp\left(2 \lambda^2 \tr(B^2)\right)
\end{equation*}
whenever $0 \le \lambda \le \frac{1}{4 \linf{b}}$.
If $\lambda \le 0$, an identical calculation holds when
$\lambda \ge -\frac{1}{4 \linf{b}}$.
This yields the first result of the lemma.

For the second, note that
$\opnorm{2 B} = \opnorms{A + A^T} \le \opnorm{A} + \opnorms{A^T}
= 2 \opnorm{A}$, while
\begin{equation*}
  \tr((A + A^T)^2)
  = \tr(A A) + \tr(AA^T) + \tr(A^TA) + \tr(A^T A^T)
  \le 4 \lfro{A}^2,
\end{equation*}
where we have used that $\<C, D\> = \tr(C^T D)$ is an inner product
on matrices and the Cauchy-Schwarz inequality.

\bibliography{bib}
\bibliographystyle{abbrvnat}

\end{document}